\newtheorem{lemma}{Lemma}
\newtheorem{proposition}{Proposition}
\newtheorem{theorem}{Theorem}
\newtheorem*{theorem-cite}{Theorem}
 \theoremstyle{remark}
\newtheorem*{example}{Example}
\def\KG{\operatorname{KG}}
\def\cd{\operatorname{cd}}
\def\alt{\operatorname{alt}}
\def\hyp{\mathcal{H}}
\def\id{\operatorname{id}}
\def\sd{\operatorname{sd}}
\author{Fr\'ed\'eric Meunier}
\address{F. Meunier, Universit\'e Paris Est, CERMICS (ENPC) \\F-77455 Marne-la-Vall\'ee}
\email{frederic.meunier@enpc.fr}
\date{\today}
\title{Colorful hypergraphs in Kneser hypergraphs}
\begin{document}

\begin{abstract}
Using a $Z_q$-generalization of a theorem of Ky Fan, we extend to Kneser hypergraphs a theorem of Simonyi and Tardos that ensures the existence of multicolored complete bipartite graphs in any proper coloring of a Kneser graph. It allows to derive a lower bound for the local chromatic number of Kneser hypergraphs (using a natural definition of what can be the local chromatic number of a hypergraph).
\end{abstract}

\keywords{colorful complete $p$-partite hypergraph; combinatorial topology; Kneser hypergraphs; local chromatic number}

\maketitle

\section{Introduction}

\subsection{Motivations and results}

A {\em hypergraph} is a pair $\hyp=(V(\hyp),E(\hyp))$, where $V(\hyp)$ is a finite set and $E(\hyp)$ a family of subsets of $V(\hyp)$. The set $V(\hyp)$ is called the {\em vertex set} and the set $E(\hyp)$ is called the {\em edge set}. A {\em graph} is a hypergraph each edge of which is of cardinality two. A {\em $q$-uniform hypergraph} is a hypergraph each edge of which is of cardinality $q$. The notions of graphs and $2$-uniform hypergraphs therefore coincide. If a hypergraph has its vertex set partitioned into subsets $V_1,\ldots,V_q$ so that each edge intersects each $V_i$ at exactly one vertex, then it is called a {\em $q$-uniform $q$-partite hypergraph}. The sets $V_1,\ldots,V_q$ are called the {\em parts} of the hypergraph. When $q=2$, such a hypergraph is a graph and said to be {\em bipartite}. A $q$-uniform $q$-partite hypergraph is said to be {\em complete} if all possible edges exist.

A {\em coloring} of a hypergraph is a map $c:V(\hyp)\rightarrow[t]$ for some positive integer $t$. A coloring is said to be {\em proper} if there in no {\em monochromatic} edge, i.e. no edge $e$ with $|c(e)|=1$. The chromatic number of such a hypergraph, denoted $\chi(\hyp)$, is the minimal value of $t$ for which a proper coloring exists. Given $X\subseteq V(\hyp)$, the hypergraph with vertex set $X$ and with edge set $\{e\in E(\hyp):e\subseteq X\}$ is the {\em subhypergraph of $\hyp$ induced by $X$} and is denoted $\mathcal{H}[X]$. \\

Given a hypergraph $\hyp=(V(\hyp),E(\hyp))$, we define the {\em Kneser graph} $\KG^2(\hyp)$ by 
$$\begin{array}{rcl}
V(\KG^2(\hyp)) & = & E(\hyp) \\
E(\KG^2(\hyp)) & = & \{\{e,f\}:e,f\in E(\hyp),\,e\cap f=\emptyset\}.
\end{array}$$
The ``usual'' Kneser graphs, which have been extensively studied -- see \cite{St76,VaVe05} among many references, some of them being given elsewhere in the present paper -- are the special cases $\hyp=([n],{{[n]}\choose k})$ for some positive integers $n$ and $k$ with $n\geq 2k$. We denote them $\KG^2(n,k)$. The main result for ``usual'' Kneser graphs is Lov\'asz's theorem~\cite{Lo79}.
\begin{theorem-cite}[Lov\'asz theorem]
Given $n$ and $k$ two positive integers with $n\geq 2k$, we have $\chi(\KG^2(n,k))=n-2k+2$.
\end{theorem-cite}

The $2$-colorability defect $\cd^2(\hyp)$ of a hypergraph $\hyp$ has been introduced by Dol'nikov~\cite{Do88} in 1988 for a generalization of Lov\'asz's theorem. It is defined as the minimum number of vertices that must be removed from $\hyp$ so that the hypergraph induced by the remaining vertices is of chromatic number at most $2$:
$$\cd^2(\hyp)=\min\{|Y|:\,Y\subseteq V(\hyp), \chi(\hyp[V(\hyp)\setminus Y])\leq 2\}.$$
\begin{theorem-cite}[Dol'nikov theorem]
Let $\hyp$ be a hypergraph and assume that $\emptyset$ is not an edge of $\hyp$. Then $\chi(\KG^2(\hyp))\geq\cd^2(\hyp)$.
\end{theorem-cite}
It is a generalization of Lov\'asz theorem since $\cd^2([n],{{[n]}\choose k})=n-2k+2$ and since the inequality $\chi(\KG^2(n,k))\leq n-2k+2$ is the easy one. 

The following theorem proposed by Simonyi and Tardos in 2007~\cite{SiTa07} generalizes Dol'nikov's theorem. The special case for ``usual'' Kneser graphs is due to Ky Fan~\cite{Fa82}.
\begin{theorem-cite}[Simonyi-Tardos theorem]
Let $\hyp$ be a hypergraph and assume that $\emptyset$ is not an edge of $\hyp$. Let $r=\cd^2(\hyp)$. Then any proper coloring of $\KG^2(\hyp)$ with colors $1,\ldots,t$ ($t$ arbitrary) must contain a completely multicolored complete bipartite graph $K_{\lceil r/2\rceil,\lfloor r/2\rfloor}$ such that the $r$ different colors occur alternating on the two parts of the bipartite graph with respect to their natural order.
\end{theorem-cite}

In 1976, Erd\H{o}s~\cite{Er76} initiated the study of {\em Kneser hypergraphs} $\KG^q(\mathcal{H})$ defined for a hypergraph $\hyp=(V(\hyp),E(\hyp))$ and an integer $q\geq 2$ by
$$\begin{array}{rcl}
V(\KG^q(\hyp)) & = & E(\hyp) \\
E(\KG^q(\hyp)) & = & \{\{e_1,\ldots,e_q\}:e_1,\ldots,e_q\in E(\hyp),\,e_i\cap e_j=\emptyset\mbox{ for all $i,j$ with $i\neq j$}\}.
\end{array}$$ 
A Kneser hypergraph is thus the generalization of Kneser graphs obtained when the $2$-uniformity is replaced by the $q$-uniformity for an integer $q\geq 2$. There are also ``usual'' Kneser hypergraphs, which are obtained with the same hypergraph $\hyp$ as for ``usual'' Kneser graphs, i.e. $\hyp=([n],{{[n]}\choose k})$. They are denoted $\KG^q(n,k)$. The main result for them is the following generalization of Lov\'asz's theorem conjectured by Erd\H{o}s and proved by Alon, Frankl, and Lov\'asz~\cite{AlFrLo86}.

\begin{theorem-cite}[Alon-Frankl-Lov\'asz theorem]
Given $n$, $k$, and $q$ three positive integers with $n\geq qk$, we have $\chi(\KG^q(n,k))=\left\lceil\frac{n-q(k-1)}{q-1}\right\rceil$.
\end{theorem-cite}

There exists also a $q$-colorability defect $\cd^q(\hyp)$, introduced by K\v{r}\'i\v{z}, defined as the minimum number of vertices that must be removed from $\hyp$ so that the hypergraph induced by the remaining vertices is of chromatic number at most $q$:
$$\cd^q(\hyp)=\min\{|Y|:\,Y\subseteq V(\hyp), \chi(\hyp[V(\hyp)\setminus Y])\leq q\}.$$
The following theorem, due to K\v{r}\'i\v{z}~\cite{Kr92,Kr00}, generalizes Dol'nikov's theorem. It also generalizes the Alon-Frankl-Lov\'asz theorem since $\cd^q([n],{{[n]}\choose k})=n-q(k-1)$ and since again the inequality $\chi(\KG^q(n,k))\leq \left\lceil\frac{n-q(k-1)}{q-1}\right\rceil$ is the easy one. 
\begin{theorem-cite}[K\v{r}\'i\v{z} theorem]
Let $\hyp$ be a hypergraph and assume that $\emptyset$ is not an edge of $\hyp$. Then $$\chi(\KG^q(\hyp))\geq\left\lceil\frac{\cd^q(\hyp)}{q-1}\right\rceil$$ for any integer $q\geq 2$.
\end{theorem-cite}

Our main result is the following extension of Simonyi-Tardos's theorem to Kneser hypergraphs.

\begin{theorem}\label{thm:main}
Let $\hyp$ be a hypergraph and assume that $\emptyset$ is not an edge of $\hyp$. Let $p$ be a prime number. Then any proper coloring $c$ of $\KG^p(\hyp)$ with colors $1,\ldots,t$ ($t$ arbitrary) must contain a complete $p$-uniform $p$-partite hypergraph with parts $U_1,\ldots,U_p$ satisfying the following properties. 
\begin{itemize}
\item It has $\cd^p(\hyp)$ vertices. 
\item The values of $|U_j|$ for $j=1,\ldots,p$ differ by at most one.
\item For any $j$, the vertices of $U_j$ get distinct colors.
\end{itemize}
\end{theorem}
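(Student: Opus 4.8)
The plan is to adapt the topological proof of the Simonyi--Tardos theorem, with the antipodal $\mathbb{Z}_2$-action there replaced by a free $\mathbb{Z}_p$-action; this is where primality of $p$ is needed. Fix $V(\hyp)=[n]$, fix a proper coloring $c\colon E(\hyp)\to[t]$ of $\KG^p(\hyp)$, and put $r=\cd^p(\hyp)$ and $m=n-r+t$. Let $P_n$ be the poset of all $x=(x_1,\dots,x_n)\in(\mathbb{Z}_p\cup\{0\})^n\setminus\{\mathbf{0}\}$ ordered by $x\preceq y$ iff $x_i\in\{0,y_i\}$ for every $i$; write $\operatorname{supp}(x)=\{i:x_i\neq 0\}$, write $x^\omega=\{i:x_i=\omega\}$ for $\omega\in\mathbb{Z}_p$, and call each such $x^\omega$ a \emph{part} of $x$. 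The group $\mathbb{Z}_p$ acts freely on $P_n$ by $(\nu\cdot x)_i=\nu+x_i$ when $x_i\neq 0$ and $(\nu\cdot x)_i=0$ otherwise. The proof will consist of building a $\mathbb{Z}_p$-equivariant labeling $\lambda\colon P_n\to\mathbb{Z}_p\times[m]$ out of $c$, feeding it to a $\mathbb{Z}_p$-analogue of Ky Fan's combinatorial theorem, and reading off the required hypergraph from the chain this analogue produces.

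I define $\lambda$ as follows. If no part of $x$ contains an edge of $\hyp$, then $(x^\omega)_{\omega\in\mathbb{Z}_p}$ is a proper $p$-coloring of $\hyp[\operatorname{supp}(x)]$, so $|\operatorname{supp}(x)|\leq n-r$ by definition of $\cd^p$; set $\lambda(x)=\bigl(x_{i_0},\,|\operatorname{supp}(x)|\bigr)$ with $i_0=\min\operatorname{supp}(x)$. Otherwise, let $j^*(x)\in[t]$ be the largest value $c(e)$ over edges $e\in E(\hyp)$ lying inside some part of $x$, choose equivariantly one part $x^{\omega^*}$ containing an edge of color $j^*(x)$ — such a choice exists because $\mathbb{Z}_p$ acts freely, so one fixes $\omega^*$ arbitrarily on one representative of each orbit and transports it — and set $\lambda(x)=\bigl(\omega^*,\,n-r+j^*(x)\bigr)$. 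From $(\nu\cdot x)^\omega=x^{\omega-\nu}$ one checks $\lambda$ is $\mathbb{Z}_p$-equivariant, with $\mathbb{Z}_p$ translating the first coordinate and fixing the second; note the second coordinate is $\leq n-r$ in the first case and $>n-r$ in the second. The key property to verify is that $\lambda$ admits no ``rainbow chain at one level'': there is no chain $x^{(1)}\prec\cdots\prec x^{(p)}$ of $p$ distinct elements of $P_n$ with $\lambda_2(x^{(1)})=\cdots=\lambda_2(x^{(p)})$ and $\lambda_1(x^{(1)}),\dots,\lambda_1(x^{(p)})$ pairwise distinct. If that common level is $\leq n-r$ this is immediate, since a strictly increasing chain cannot keep $|\operatorname{supp}|$ constant. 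If it is $n-r+j$, then for each $l$ the part $(x^{(l)})^{\lambda_1(x^{(l)})}$ contains an edge $e_l$ with $c(e_l)=j$; as $x^{(l)}\preceq x^{(p)}$ for all $l$ and the $\lambda_1(x^{(l)})$ exhaust $\mathbb{Z}_p$, the $e_l$ lie in the pairwise disjoint parts of $x^{(p)}$, hence are pairwise disjoint and form a monochromatic edge of $\KG^p(\hyp)$, contradicting that $c$ is proper. (This also shows at most $p-1$ parts of $x$ ever attain $j^*(x)$, so the choice of $\omega^*$ is genuinely constrained but still possible.)

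Now I invoke the $\mathbb{Z}_p$-version of Ky Fan's theorem: for $p$ prime, an equivariant $\lambda\colon P_n\to\mathbb{Z}_p\times[m]$ with no rainbow chain at one level yields a chain $x^{(1)}\prec\cdots\prec x^{(n)}$ along which the second coordinates are non-decreasing, the first coordinates are pairwise distinct within each maximal run of equal second coordinate, and the first coordinates are spread across $\mathbb{Z}_p$ as evenly as possible. Since the levels $1,\dots,n-r$ each occur at most once along the chain, at least $r=\cd^p(\hyp)$ of the $x^{(i)}$ sit at levels $>n-r$, and those form a final segment of the chain. Selecting suitably $r$ of them and writing $\omega_i=\lambda_1(x^{(i)})$, each selected $x^{(i)}$ carries an edge $e_i\in E(\hyp)$ inside its part $x^{(i),\omega_i}$ with $c(e_i)=\lambda_2(x^{(i)})-(n-r)$. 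Partition these $r$ edges into $U_\omega=\{e_i:\omega_i=\omega\}$ for $\omega\in\mathbb{Z}_p$. Two edges in the same part sit at different levels (a common level forces distinct first coordinates), so they get distinct colors; the parts differ in size by at most one by the even spreading of first coordinates; and any choice of one edge per part, say $e_{i_1},\dots,e_{i_p}$, is pairwise disjoint, since for $i_a\leq i_b$ one has $x^{(i_a),\omega_{i_a}}\subseteq x^{(i_b),\omega_{i_a}}$, which is disjoint from $x^{(i_b),\omega_{i_b}}$. Hence the complete $p$-uniform $p$-partite hypergraph on $U_1,\dots,U_p$ is a subhypergraph of $\KG^p(\hyp)$ with exactly $\cd^p(\hyp)$ vertices and with the stated properties. (Merely counting the distinct levels $>n-r$ used already re-proves the K\v{r}\'i\v{z} bound.)

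The main obstacle lies in the $\mathbb{Z}_p$-Fan step. One must establish a $\mathbb{Z}_p$-generalization of Ky Fan's theorem whose conclusion is strong enough to control simultaneously the number of color-carrying chain elements and the near-uniform distribution of their $\mathbb{Z}_p$-labels; this rests on $\mathbb{Z}_p$-equivariant topology (of Dold type), and is exactly why $p$ must be prime. The second delicate point, tightly tied to the precise form of that conclusion, is the bookkeeping that extracts from it a balanced selection of exactly $\cd^p(\hyp)$ color-carrying vertices. Everything else — the equivariant tie-breaking in the definition of $\lambda$, and the disjointness and distinct-color checks — is routine.
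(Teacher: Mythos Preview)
Your overall framework is right and matches the paper's in spirit, but the proof has a genuine gap exactly where you flag ``the main obstacle'': the $\mathbb{Z}_p$-Fan statement you invoke does not exist in the form you need. The known $Z_p$-Fan lemma (the paper's Lemma~\ref{ZpFan}) gives, from a $Z_p$-equivariant chain map $\mathcal{C}(\sd(Z_p^{*n}))\to\mathcal{C}(Z_p^{*m})$, an $(n-1)$-simplex $\{(\epsilon_1,\mu_1),\dots,(\epsilon_n,\mu_n)\}$ with $\mu_1<\cdots<\mu_n$ and $\epsilon_i\neq\epsilon_{i+1}$. For $p=2$ this alternation forces the signs to be split as evenly as possible, but for $p\geq 3$ it does not: a sequence such as $1,2,1,2,\dots$ is alternating yet never hits the third element of $\mathbb{Z}_p$. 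So ``consecutive signs differ'' is far from ``signs spread across $\mathbb{Z}_p$ as evenly as possible'', and your labeling $\lambda$ into $\mathbb{Z}_p\times[n-r+t]$ together with the standard lemma will not, by itself, deliver balanced parts. Your ``no rainbow chain at one level'' condition is the right analogue of the Tucker/Fan hypothesis, but the conclusion you want from it is precisely the new content of the theorem, not something one can quote.

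What the paper actually does is manufacture the balance by an extra layer. It first sends $\sd(Z_p^{*n})$ by a simplicial $f$ into $Z_p^{*(n-r)}*\bigl(\sigma_{p-2}^{p-1}\bigr)^{*t}$, where the second factor allows up to $p-1$ vertices of the \emph{same} color but different $\mathbb{Z}_p$-labels to coexist in a simplex (this is why the paper's target has $m=n-r+t(p-1)$, not your $n-r+t$). It then defines a second simplicial map $g$ on the barycentric subdivision whose sign component is governed by the set of $\mathbb{Z}_p$-labels of \emph{minimum} multiplicity in $\tau$. Applying the ordinary $Z_p$-Fan lemma to $g_{\#}\circ\sd_{\#}\circ f_{\#}$ yields an alternating simplex; the alternation $\epsilon_i\neq\epsilon_{i+1}$ now forces that each new vertex added to $\tau$ enters a part of current minimum size, and an easy induction gives $\bigl||\tau_n^{\epsilon}|-|\tau_n^{\epsilon'}|\bigr|\leq 1$. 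This device --- routing through $(\sigma_{p-2}^{p-1})^{*t}$ and the map $g$ --- is exactly the missing idea that turns ``alternating'' into ``balanced'' and is what you would need to supply to complete your argument.
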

We get that each $U_j$ is of cardinality $\lfloor\cd^p(\hyp)/p\rfloor$ or $\lceil\cd^p(\hyp)/p\rceil$.

Note that Theorem~\ref{thm:main} implies directly K\v{r}\'i\v{z}'s theorem when $q$ is a prime number $p$: each color may appear at most $p-1$ times within the vertices and there are $\cd^p(\hyp)$ vertices. There is a standard derivation of K\v{r}\'i\v{z}'s theorem for any $q$ from the prime case, see \cite{Zi02,Zi06}. Theorem~\ref{thm:main} is a generalization of Simonyi-Tardos's theorem except for a slight loss: when $p=2$, we do not recover the alternation of the colors between the two parts. 

Whether Theorem~\ref{thm:main} is true for non-prime $p$ is an open question.

\section{Local chromatic number and Kneser hypergraphs}\label{sec:loc}

In a graph $G=(V,E)$, the {\em closed neighborhood} of a vertex $u$, denoted $N[u]$, is the set $\{u\}\cup\{v:\,uv\in E\}$.
The {\em local chromatic number} of a graph $G=(V,E)$, denoted $\chi_{\ell}(G)$, is the maximum number of colors appearing in the closed neighborhood of a vertex minimized over all proper colorings: $$\chi_{\ell}(G)=\min_{c}\max_{v\in V}|c(N[v])|,$$ where the minimum is taken over all proper colorings $c$ of $G$. This number has been defined in 1986 by Erd\H{o}s, F\"uredi,
Hajnal, Komj\'ath, R\"odl, and Seress~\cite{ErFuHaKoRo86}. For Kneser graphs, we have the following theorem, which is a consequence of the Simonyi-Tardos theorem: any vertex of the part with $\lfloor r/2\rfloor$ vertices in the completely multicolored complete bipartite subgraph has at least $\lceil r/2\rceil+1$ colors in its closed neighborhhod (where $r=\cd^2(\hyp)$).

\begin{theorem-cite}[Simonyi-Tardos theorem for local chromatic number]
Let $\hyp$ be a hypergraph and assume that $\emptyset$ is not an edge of $\hyp$. If $\cd^2(\hyp)\geq 2$, then $$\chi_{\ell}(\KG^2(\hyp))\geq\left\lceil\frac{\cd^2(\hyp)}{2}\right\rceil+1.$$
\end{theorem-cite}
 
Note that we can also see this theorem as a direct consequence of Theorem 1 in~\cite{SiTa06} (with the help of Theorem 1 in~\cite{MaZi04}).

We use the following natural definition for the local chromatic number $\chi_{\ell}(\hyp)$ of a uniform hypergraph $\hyp=(V,E)$. For a subset $X$ of $V$, we denote by $\mathcal{N}(X)$ the set of vertices $v$ such that $v$ is the sole vertex outside $X$ for some edge in $E$: $$\mathcal{N}(X)=\{v:\,\exists e\in E\mbox{ s.t. }e\setminus X=\{v\}\}.$$ We define furthermore $\mathcal{N}[X]:=X\cup\mathcal{N}(X)$. Note that if the hypergraph is a graph, $\mathcal{N}[\{v\}]=N[v]$ for any vertex $v$. The definition of the local chromatic number of a hypergraph is then:
$$\chi_{\ell}(\hyp)=\min_{c}\max_{e\in E,\,v\in e}|c(\mathcal{N}[e\setminus\{v\}])|,$$ where the minimum is taken over all proper colorings $c$ of $\hyp$. When the hypergraph $\hyp$ is a graph, we get the usual notion of local chromatic number for graphs.

The following theorem is a consequence of Theorem~\ref{thm:main} and generalizes the Simonyi-Tardos theorem for local chromatic number to Kneser hypergraphs.

\begin{theorem}\label{thm:lochyp}
Let $\hyp$ be a hypergraph and assume that $\emptyset$ is not an edge of $\hyp$. Then 
$$\chi_{\ell}(\KG^p(\hyp))\geq\min\left(\left\lceil\frac{\cd^p(\hyp)}{p}\right\rceil+1,\left\lceil\frac{\cd^p(\hyp)}{p-1}\right\rceil\right)$$ for any prime number $p$.
\end{theorem}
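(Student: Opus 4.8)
The plan is to derive Theorem~\ref{thm:lochyp} from Theorem~\ref{thm:main} by a counting argument inside the guaranteed colorful complete $p$-partite subhypergraph, exactly paralleling the way the Simonyi--Tardos local bound follows from the Simonyi--Tardos theorem. Fix a prime $p$ and let $r=\cd^p(\hyp)$. If $r=0$ there is nothing to prove, so assume $r\geq 1$. Take an arbitrary proper coloring $c$ of $\KG^p(\hyp)$; by Theorem~\ref{thm:main} it contains a complete $p$-uniform $p$-partite subhypergraph with parts $U_1,\ldots,U_p$, where $\sum_j|U_j|=r$, the sizes $|U_j|$ differ by at most one (so each is $\lfloor r/p\rfloor$ or $\lceil r/p\rceil$), and within each part all colors are distinct. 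Relabel the parts so that $|U_1|\leq|U_2|\leq\cdots\leq|U_p|$, and pick any vertex $w\in U_1$ (so $w$ is an edge of $\hyp$ viewed as a vertex of the Kneser hypergraph, lying in the ``smallest'' part).

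The key step is to locate, inside $\mathcal{N}[e\setminus\{v\}]$ for a suitable edge $e$ and vertex $v$, many distinct colors. Consider the edge $e=\{w\}\cup\{u_2,\ldots,u_p\}$ of the complete $p$-partite subhypergraph obtained by choosing one vertex $u_j\in U_j$ for each $j\geq 2$; this is an edge of $\KG^p(\hyp)$, hence the $p$ sets $w,u_2,\ldots,u_p$ are pairwise disjoint edges of $\hyp$. Set $v=w$, so $e\setminus\{v\}=\{u_2,\ldots,u_p\}$. I claim every vertex of $U_1$ lies in $\mathcal{N}[e\setminus\{v\}]$: indeed for any $w'\in U_1$, the set $\{w',u_2,\ldots,u_p\}$ is again an edge of the complete $p$-partite subhypergraph, hence an edge $f$ of $\KG^p(\hyp)$ with $f\setminus\{u_2,\ldots,u_p\}=\{w'\}$, so $w'\in\mathcal{N}(\{u_2,\ldots,u_p\})$. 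Thus $U_1\subseteq\mathcal{N}[e\setminus\{v\}]$. Since the colors on $U_1$ are all distinct, $\mathcal{N}[e\setminus\{v\}]$ already carries $|U_1|$ colors; and it also contains $u_2,\ldots,u_p$ themselves, which form an edge of $\KG^p(\hyp)$ and so get at least two distinct colors (properness), at least one of which — in fact at least $p-|U_1|$ of them, if one argues more carefully — is new relative to $c(w)$. A clean way to get the $+1$: if $|U_1|<|U_p|$ then $p\nmid r$ is not needed; rather, because the coloring is proper on the edge $\{u_2,\ldots,u_p,w\}$, at least one color among $c(u_2),\ldots,c(u_p)$ differs from $c(w)$, giving $|c(\mathcal{N}[e\setminus\{v\}])|\geq |U_1|+1=\lfloor r/p\rfloor+1$.

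This already yields $\chi_{\ell}(\KG^p(\hyp))\geq\lfloor r/p\rfloor+1$, which is weaker than the stated bound whenever $p\mid r$. To recover the $\lceil r/p\rceil+1$ term one must handle the case $p\mid r$ separately: then all $|U_j|=r/p$, and the above gives only $r/p+1=\lceil r/p\rceil+1$, which is exactly what we want — so the bound $\lfloor r/p\rfloor+1$ coincides with $\lceil r/p\rceil+1$ precisely when $p\mid r$, and is off by one otherwise. The resolution is the second term $\lceil r/p\rceil$ in the minimum: when $p\nmid r$, some part, say $U_p$, has size $\lceil r/p\rceil$; repeating the argument with $w$ chosen in $U_p$ and $e\setminus\{v\}$ a transversal of the other parts shows $\mathcal{N}[e\setminus\{v\}]\supseteq U_p$, giving $|c(\mathcal{N}[e\setminus\{v\}])|\geq|U_p|=\lceil r/p\rceil$ (here we do not even need the $+1$). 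Combining the two cases, $\chi_{\ell}(\KG^p(\hyp))$ is at least $\lfloor r/p\rfloor+1$ when $p\mid r$ and at least $\max(\lfloor r/p\rfloor+1,\lceil r/p\rceil)=\lceil r/p\rceil$ when $p\nmid r$; since $\lceil r/p\rceil+1>\lceil r/p\rceil\geq\lfloor r/p\rfloor+1$ in the latter case, in all cases $\chi_{\ell}(\KG^p(\hyp))\geq\min(\lceil r/p\rceil+1,\lceil r/p\rceil)$, which is the claimed inequality once one checks the min is taken correctly.

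The main obstacle I anticipate is bookkeeping the ``$+1$'' versus the ceiling/floor discrepancy cleanly — that is, verifying in each divisibility case that the colorful part contributes $|U_j|$ colors and that at least one \emph{additional} color appears among the remaining $p-1$ vertices of the chosen edge, without double-counting $c(w)$. One must be careful that the ``new'' color genuinely lies in $\mathcal{N}[e\setminus\{v\}]$: it does, since $u_2,\ldots,u_p\in e\subseteq\mathcal{N}[e\setminus\{v\}]$ trivially. A secondary point to get right is that the definition of $\chi_\ell(\hyp)$ maximizes over all pairs $(e,v)$ with $v\in e$, so exhibiting one bad pair $(e,v)$ for \emph{every} proper coloring $c$ suffices; since Theorem~\ref{thm:main} applies to every proper coloring of $\KG^p(\hyp)$, the minimum over $c$ in the definition of $\chi_\ell$ is bounded below as required. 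Finally, one should note the edge case $\cd^p(\hyp)\leq p-1$ (where $\lceil r/(p-1)\rceil\leq 1$ or the complete $p$-partite hypergraph is empty) is automatically consistent with the statement, since the minimum on the right-hand side is then small.
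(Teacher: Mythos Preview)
Your overall framework is right---use the colorful complete $p$-partite subhypergraph from Theorem~\ref{thm:main}, remove one vertex $u$ from an edge $e$, and observe that the whole part of $u$ sits inside $\mathcal{N}[e\setminus\{u\}]$---but the ``$+1$'' step has a real gap. You argue that properness of the edge $e=\{w,u_2,\ldots,u_p\}$ forces some $c(u_j)\neq c(w)$, and conclude $|c(\mathcal{N}[e\setminus\{w\}])|\geq |U_1|+1$. That does not follow: the color $c(u_j)$ may well coincide with $c(w')$ for some other $w'\in U_1$, so it need not be new relative to $c(U_1)$. Concretely, nothing you wrote rules out $c(\{u_2,\ldots,u_p\})\subseteq c(U_1)$. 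As a result, the only bound you have actually established in every case is $\chi_\ell\geq\lceil r/p\rceil$, which is strictly weaker than the theorem whenever $\lceil r/(p-1)\rceil>\lceil r/p\rceil$ (e.g.\ $r=2p-1$). Your final displayed conclusion, $\min(\lceil r/p\rceil+1,\lceil r/p\rceil)$, reflects this: that minimum is just $\lceil r/p\rceil$, not the bound claimed in the theorem.

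The fix is a pigeonhole argument, and it also tells you that the correct case split is not ``$p\mid r$ versus $p\nmid r$'' but rather ``$\lceil r/(p-1)\rceil>\lceil r/p\rceil$ versus equality''. Work with a \emph{largest} part $U_j$, of size $\lceil r/p\rceil$. If every vertex of $\mathcal{G}$ had its color inside $c(U_j)$, then the $r$ vertices use at most $\lceil r/p\rceil$ colors; under the hypothesis $\lceil r/(p-1)\rceil>\lceil r/p\rceil$, equivalently $r>(p-1)\lceil r/p\rceil$, some color would then occur at least $p$ times, hence once in each part (colors are distinct within parts), producing a monochromatic edge of $\KG^p(\hyp)$---contradiction. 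So there exists $v\notin U_j$ with $c(v)\notin c(U_j)$; take any edge $e$ of $\mathcal{G}$ through $v$ and let $u=e\cap U_j$, obtaining $|c(\mathcal{N}[e\setminus\{u\}])|\geq |U_j|+1=\lceil r/p\rceil+1$. In the remaining case $\lceil r/(p-1)\rceil=\lceil r/p\rceil$, the trivial bound $\lceil r/p\rceil$ already matches the required minimum. This is exactly how the paper proceeds.
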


\begin{proof}
Let $c$ be any proper coloring of $\KG^p(\hyp)$. Consider the complete $p$-uniform $p$-partite hypergraph $\mathcal{G}$ in $\KG^p(\hyp)$ whose existence is ensured by Theorem~\ref{thm:main}. Choose $U_j$ of cardinality $\lceil\cd^p(\hyp)/p\rceil$.
 
If $\lceil\cd^p(\hyp)/(p-1)\rceil>\lceil\cd^p(\hyp)/p\rceil$, then there is a vertex $v$ of $\mathcal{G}$ not in $U_j$ whose color is distinct of all colors used in $U_j$. Choose any edge $e$ of $\mathcal{G}$ containing $v$ and let $u$ be the unique vertex of $e\cap U_j$. We have then $|c(\mathcal{N}[e\setminus\{u\}])|\geq|U_j|+1=\lceil\cd^p(\hyp)/p\rceil+1.$

Otherwise, $\lceil\cd^p(\hyp)/(p-1)\rceil=\lceil\cd^p(\hyp)/p\rceil$, and for any edge $e$, we have $|c(\mathcal{N}[e\setminus\{u\}])|\geq\lceil\cd^p(\hyp)/p\rceil=\lceil\cd^p(\hyp)/(p-1)\rceil$, with $u$ being again the unique vertex of $e\cap U_j$.
\end{proof}

As for Theorem~\ref{thm:main}, we do not know whether this theorem remains true for non-prime $p$.

\section{Combinatorial topology and proof of the main result}\label{sec:main}

\subsection{Tools of combinatorial topology}

\subsubsection{Basic definitions}

We use the cyclic and muliplicative group $Z_q=\{\omega^j: j=1,\ldots,q\}$ of the $q$th roots of unity. We emphasize that $0$ is not considered as an element of $Z_q$. For a vector $X=(x_1,\ldots,x_n)\in(Z_q\cup\{0\})^n$, we define $X^{j}$ to be the set $\{i\in[n]:\,x_i=\omega^j\}$ and $|X|$ to be the quantity $|\{i\in[n]:\,x_i\neq 0\}|$.

An (abstract) simplicial complex $\mathsf{K}$ is a collection of subsets of a finite set $V(\mathsf{K})$, called the {\em vertex set}, such that whenever $\sigma\in\mathsf{K}$ and $\tau\subseteq\sigma$, we have $\tau\in\mathsf{K}$. Such a $\tau$ is called a {\em face} of $\sigma$. A simplicial complex is said to be {\em pure} if all maximal simplices for inclusion have same dimension. In the sequel, all simplicial complexes are abstract and we omit this specification from now on.

\subsubsection{Chains and chain maps}

Let $\mathsf K$ be a simplicial complex. We denote its chain complex by $\mathcal{C}(\mathsf{K})$. We always assume that the coefficients are taken in $\mathbb{Z}$.

\subsubsection{Special simplicial complexes}

For a simplicial complex $\mathsf{K}$, its first barycentric subdivision is denoted by $\sd(\mathsf{K})$. It is the simplicial complex whose vertices are the nonempty simplices of $\mathsf{K}$ and whose simplices are the collections of simplices of $\mathsf{K}$ that are pairwise comparable for $\subseteq$ (these collections are usually called {\em chains} in the poset terminology, with a different meaning as the one used above in ``chain complexes'').

As a simplicial complex, $Z_q$ is seen as being $0$-dimensional and with $q$ vertices. $Z_q^{*d}$ is the join of $d$ copies of $Z_q$. It is a pure simplicial complex of dimension $d-1$. A vertex $v$ taken is the $\mu$th copy of $Z_q$ in $Z_q^{*d}$ is also written $(\epsilon,\mu)$ where $\epsilon\in Z_q$ and $\mu\in[d]$. Sometimes, $\epsilon$ is called the {\em sign} of the vertex, and $\mu$ its {\em absolute value}. This latter quantity is denoted $|v|$.

The simplicial complex $\sd(Z_q^{*d})$ plays a special role. We have $V\left(\sd(Z_q^{*d})\right)\simeq\left(Z_q\cup\{0\}\right)^d\setminus\{(0,\ldots,0)\}$: a simplex $\sigma\in Z_q^{*d}$ corresponds to the vector $X=(x_1,\ldots,x_d)\in\left(Z_q\cup\{0\}\right)^d$ with $x_{\mu}=\epsilon$ for all $(\epsilon,\mu)\in\sigma$ and $x_{\mu}=0$ otherwise.

We denote by $\sigma_{q-2}^{q-1}$ the simplicial complex obtained from a $(q-1)$-dimensional simplex and its faces by deleting the maximal face. It is hence a $(q-2)$-dimensional pseudomanifold homeomorphic to the $(q-2)$-sphere. We also identify its vertices with $Z_q$. A vertex of the simplicial complex $\left(\sigma_{q-2}^{q-1}\right)^{*d}$ is again denoted by $(\epsilon,\mu)$ where $\epsilon\in Z_q$ and $\mu\in[d]$. For $\epsilon\in Z_q$ and a simplex $\tau$ of $\left(\sigma_{p-2}^{p-1}\right)^{*d}$, we denote by $\tau^{\epsilon}$ the set of all vertices of $\tau$ having $\epsilon$ as sign, i.e. $\tau^{\epsilon}:=\{(\omega,\mu)\in \tau:\,\omega=\epsilon\}$. Note that if $q$ is a prime number, $Z_q$ acts freely on $\sigma_{q-2}^{q-1}$.

\subsubsection{Barycentric subdivision operator}

Let $\mathsf{K}$ be a simplicial complex. There is a natural chain map $\sd_{\#}:\mathcal{C}(\mathsf{K})\rightarrow\mathcal{C}(\sd(\mathsf{K}))$ which, when evaluated on a $d$-simplex $\sigma\in\mathsf{K}$, returns the sum of all $d$-simplices in $\sd (\mathsf{K})$ contained in $\sigma$, with the induced orientation. ``Contained'' is understood according to  the geometric interpretation of the barycentric subdivision. If $\mathsf{K}$ is a free $Z_q$-simplicial complex, $\sd_{\#}$ is a $Z_q$-equivariant map.

\subsubsection{The $Z_q$-Fan lemma}
The following lemma plays a central role in the proof of Theorem~\ref{thm:main}. It is proved (implicitely and in a more general version) in \cite{HaSaScZi09,Me05}. 

\begin{lemma}[$Z_q$-Fan lemma]\label{ZpFan}
Let $q\geq 2$ be a positive integer.
Let $\lambda_{\#}: \mathcal{C}\left(\sd(Z_q^{*n})\right)\rightarrow \mathcal{C}\left(Z_q^{*m}\right)$ be a $Z_q$-equivariant chain map. Then there is an $(n-1)$-dimensional simplex $\rho$ in the support of $\lambda_{\#}(\rho')$, for some $\rho'\in\sd(Z_q^{*n})$, of the form $\{(\epsilon_1,\mu_1),(\epsilon_2,\mu_2),\ldots,(\epsilon_n,\mu_n)\}$, with $\mu_i<\mu_{i+1}$ and $\epsilon_i\neq\epsilon_{i+1}$ for $i=1,\ldots,n$.
\end{lemma}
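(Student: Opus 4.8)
The plan is to argue by contradiction and to turn the failure of the conclusion into a forbidden $Z_q$-equivariant map. Suppose that for \emph{every} $(n-1)$-dimensional simplex $\rho'\in\sd(Z_q^{*n})$ no simplex of the prescribed ``alternating'' form appears in the support of $\lambda_{\#}(\rho')$. Write every simplex of $Z_q^{*m}$ canonically as $\sigma=\{(\epsilon_1,\mu_1),\ldots,(\epsilon_\ell,\mu_\ell)\}$ with $\mu_1<\cdots<\mu_\ell$ and set $\alt(\sigma)=1+\#\{i\geq 2:\epsilon_i\neq\epsilon_{i-1}\}$, which one checks equals the length of a longest subsequence of $(\epsilon_1,\ldots,\epsilon_\ell)$ with no two consecutive equal entries. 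Then $\alt$ is monotone under passing to faces and is preserved by the $Z_q$-action (rotating all signs does not change which consecutive pairs coincide), so $\mathsf{M}':=\{\sigma\in Z_q^{*m}:\alt(\sigma)\leq n-1\}$ is a $Z_q$-invariant subcomplex; moreover it is free, since $Z_q^{*m}$ is free (a simplex of a join of discrete $q$-point sets uses exactly one vertex per copy, so no nonempty simplex is fixed). A $d$-simplex in the image of $\lambda_{\#}$ has at most $d+1\leq n$ vertices, and it can reach $\alt=n$ only by being an alternating $(n-1)$-simplex, which is excluded; hence every simplex hit by $\lambda_{\#}$ lies in $\mathsf{M}'$, i.e.\ $\lambda_{\#}$ factors through a $Z_q$-equivariant chain map $\lambda'_{\#}:\mathcal{C}(\sd(Z_q^{*n}))\to\mathcal{C}(\mathsf{M}')$. (As in all applications I take $\lambda_{\#}$ non-degenerate, e.g.\ augmentation-preserving; this passes to $\lambda'_{\#}$.)

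The next, and decisive, step is to show that $\mathsf{M}'$ has ``$Z_q$-index at most $n-2$'', by constructing a $Z_q$-equivariant simplicial map $f:\sd(\mathsf{M}')\to Z_q^{*(n-1)}$ — equivalently, a $Z_q$-equivariant collapse of $\mathsf{M}'$ onto a subcomplex of dimension $n-2$. The guiding idea is that a simplex of alternation $a$ is ``morally $a$-dimensional'': scanning $\sigma$ from the left, assign position $i$ the rank $r_i:=1+\#\{j\leq i:\epsilon_j\neq\epsilon_{j-1}\}\in[\alt(\sigma)]$ and route the vertex $(\epsilon_i,\mu_i)$ to the vertex $(\epsilon_i,r_i)$ of $Z_q^{*(n-1)}$; since $\alt\leq n-1$ on all of $\mathsf{M}'$ only $n-1$ slots are ever needed, and equivariance is automatic because $r_i$ depends only on the pattern of coincidences of signs, not on the signs themselves (this is why one passes to $\sd(\mathsf{M}')$: a vertex of $\sd(\mathsf{M}')$ is a whole simplex, which carries the context the ranks require). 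Composing, $f_{\#}\circ\sd_{\#}\circ\lambda'_{\#}$ is a non-degenerate $Z_q$-equivariant chain map $\mathcal{C}(\sd(Z_q^{*n}))\to\mathcal{C}(Z_q^{*(n-1)})$. But $\sd(Z_q^{*n})$ is $Z_q$-equivariantly homeomorphic to $Z_q^{*n}$, hence a free $Z_q$-complex that is $(n-2)$-connected, while $Z_q^{*(n-1)}$ is a free $Z_q$-complex of dimension $n-2$; the equivariant cohomology of free $Z_q$-complexes (a Dold/Borsuk--Ulam-type nonexistence statement, valid for every $q\geq 2$ because $H^{n-1}(B Z_q;\mathbb{Z}_q)\neq 0$) forbids such a map. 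This contradiction proves the lemma. One may also run a proof closer to Ky Fan's original method: over all $\rho'$, count with signs the occurrences in $\lambda_{\#}(\rho')$ of the ``almost alternating'' $(n-1)$-simplices (alternating except possibly for the last sign), arrange these into a one-dimensional chain-level incidence structure, and read off that a fully alternating $(n-1)$-simplex must be produced.

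The step I expect to be the real obstacle is the construction in the second paragraph: making the slogan ``alternation $a$ $\Rightarrow$ dimension $a$'' into an honest $Z_q$-equivariant chain map forces one to control how the rank function $r_i$ changes when a vertex of $\sigma$ is deleted — deleting a vertex can merge two consecutive blocks and lower all subsequent ranks — so checking that $f_{\#}$ actually commutes with $\partial$ (not merely that its supports are right) is the delicate combinatorial heart, and is precisely the gap between a ``$Z_q$-Tucker'' statement and a ``$Z_q$-Fan'' statement. In the Fan-style route the analogous difficulty is showing that the signed count of almost-alternating simplices is boundary-invariant, which again reduces to a careful case analysis of how $\alt$ interacts with the boundary operator. (Note that the primality of $q$ plays no role here; it will enter only when this lemma is later applied with the free complex $(\sigma_{q-2}^{q-1})^{*d}$, whose $Z_q$-action is free only for $q$ prime.)
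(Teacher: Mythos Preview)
Your approach is genuinely different from the paper's. The paper does not prove the lemma from scratch: it simply invokes Theorem~5.4 of \cite{HaSaScZi09}, whose argument is an inductive counting of alternating simplices along a ``generalized sphere'' in $\sd(Z_q^{*n})$ (the numbers $\alpha_i$), showing $\alpha_0=1$ forces $\alpha_{n-1}\neq 0$. That is precisely the Fan-style signed-count route you mention at the end of your second paragraph as an alternative; your main route, by contrast, is index-theoretic: trap the image in the subcomplex $\mathsf{M}'=\{\sigma:\alt(\sigma)\leq n-1\}$, exhibit a $Z_q$-map $\sd(\mathsf{M}')\to Z_q^{*(n-1)}$, and invoke a chain-level Dold obstruction. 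This is a perfectly good strategy and, once completed, arguably more conceptual than the counting proof; the counting proof, on the other hand, yields quantitative information (a nonzero signed number of alternating simplices) that the index argument does not.

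There is, however, a real gap where you yourself flag one, and it is not where you think. Your description of $f$ does not define a map $\sd(\mathsf{M}')\to Z_q^{*(n-1)}$ at all: you describe sending each vertex $(\epsilon_i,\mu_i)$ of a simplex $\sigma$ to $(\epsilon_i,r_i)$, but a vertex of $\sd(\mathsf{M}')$ is the whole simplex $\sigma$, and it must go to a \emph{single} vertex of $Z_q^{*(n-1)}$. The fix is short and makes the ``hard step'' disappear: set $f(\sigma)=\bigl(\epsilon_{\mathrm{last}}(\sigma),\alt(\sigma)\bigr)$, where $\epsilon_{\mathrm{last}}(\sigma)$ is the sign of the vertex of $\sigma$ with largest $\mu$. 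Equivariance is immediate. For simpliciality one must check that $\sigma\subsetneq\sigma'$ with $\alt(\sigma)=\alt(\sigma')$ forces $\epsilon_{\mathrm{last}}(\sigma)=\epsilon_{\mathrm{last}}(\sigma')$; but adding a vertex to $\sigma$ without increasing $\alt$ either inserts it below the current maximum $\mu$ (last sign unchanged) or appends it at the end with the same sign as the old last vertex (again unchanged), so a one-line induction suffices. With this $f$ in hand, your composition $f_{\#}\circ\sd_{\#}\circ\lambda'_{\#}$ is the desired forbidden $Z_q$-equivariant augmentation-preserving chain map $\mathcal{C}(\sd(Z_q^{*n}))\to\mathcal{C}(Z_q^{*(n-1)})$; the nonexistence of such a map (for any $q\geq 2$, since $Z_q$ acts freely on $Z_q^{*d}$) is the chain-level Dold statement you cite, and you should state it as such rather than the topological version.
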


This $\rho'$ is an {\em alternating simplex}.

\begin{proof}
The proof is exactly the proof of Theorem 5.4 (p.415) of \cite{HaSaScZi09}. The complex $X$ in the statement of this Theorem 5.4 is our complex $\sd(Z_q^{*n})$, the dimension $r$ is $n-1$, and the generalized $r$-sphere $(x_i)$ is any generalized $(n-1)$-sphere of $\sd(Z_q^{*n})$ with $x_0$ reduced to a single point.
The chain map $h_{\bullet}^{\ell}$ is induced by our chain map $\lambda_{\#}$, instead of being induced by the chain map $\ell_{\#}$ of \cite{HaSaScZi09} (itself induced by the labeling $\ell$). It does not change the proof since $h_{\bullet}^{\lambda}$ only uses the fact that $\ell_{\#}$ is a $Z_q$-equivariant chain map. In the statement of Theorem 5.4 of \cite{HaSaScZi09}, $\alpha_i$ is always a lower bound on the number of ``alternating patterns'' (i.e. simplices $\rho'$ as in the statement of the lemma) in $\ell_{\#}(x_i)$, even for odd $i$ since the map $f_i$ in Theorem 5.4 of \cite{HaSaScZi09} is zero on non-alternating elements. Since $\alpha_0=1$, we get that $\alpha_i\neq 0$ for all $0\leq i\leq n-1$.
\end{proof}

In particular, for $q=2$, it gives the Ky Fan theorem~\cite{Fa52} used for instance in \cite{Fa82,Me05bis,SiTa06} to derive properties of Kneser graphs.

\subsection{Proof of the main result}

\begin{proof}[Proof of Theorem~\ref{thm:main}] The proof goes as follows. We assume given a proper coloring $c$ of $\KG^p(\hyp)$. With the help of the coloring $c$, we build a $Z_p$-equivariant chain map $\psi_{\#}:\mathcal{C}(\sd(Z_p^{*n}))\rightarrow\mathcal{C}(Z_p^{*m})$, where $m=n-\cd^p(\hyp)+t(p-1)$. We apply Lemma~\ref{ZpFan} to get the existence of some alternating simplex $\rho'$ in $\sd(Z_p^{*n})$. Using properties of $\psi_{\#}$ (especially the fact that it is a composition of maps in which simplicial maps are involved), we show that this alternating simplex provides a complete $p$-uniform $p$-partite hypergraph in $\hyp$ with the required properties. 

Let $r=\cd^p(\hyp)$. We denote $\mathsf{L}:= Z_p^{*(n-r)}$ and $\mathsf{M}:=\left(\sigma_{p-2}^{p-1}\right)^{*t}$. 
Following the ideas of \cite{Ma04,Zi02}, we define $f:\left(Z_p\cup\{0\}\right)^n\setminus\{(0,\ldots,0)\}\rightarrow Z_p\times[m]$ with $m=n-r+t(p-1)$. \\

\noindent {\bf\em If $X\in\left(Z_p\cup\{0\}\right)^n\setminus\{(0,\ldots,0)\}$ is such that $|X|\leq n-r$.} Then $f(X):=(\epsilon,|X|)$ with $\epsilon$ is the first nonzero component in $X$. 
\medskip

\noindent {\bf\em If $X\in\left(Z_p\cup\{0\}\right)^n\setminus\{(0,\ldots,0)\}$ is such that $|X|\geq n-r+1$.} By definition of the colorability defect, at least one of the $X^j$ with $j\in[p]$, contains an edge of $\hyp$. Choose $j\in[p]$ such that there is $S\subseteq X^j$ with $S\in E(\hyp)$. Its defines $F(X):=S$ and $f(X):=(\epsilon,n-r+c(F(X)))$.\\

Note that $f$ induces a $Z_p$-equivariant simplicial map $f:\sd(Z_p^{*n})\rightarrow\mathsf{L}*\mathsf{M}$. \\

Let $W_{\ell}$ be the set of simplices $\tau\in\mathsf{M}$ such that $|\tau^{\epsilon}|=0$ or $|\tau^{\epsilon}|=\ell$ for all $\epsilon\in Z_p$. Let $W=\bigcup_{\ell=1}^mW_{\ell}$. Choose an arbitrary equivariant map $s:W\rightarrow Z_p$. Such a map can be easily built by choosing one representative in each orbit ($Z_p$ acts freely on each $W_{\ell}$). We build also an equivariant map $s_0:\sigma_{p-2}^{p-1}\rightarrow Z_p$, again by choosing one representative in each orbit of the action of $Z_p$. \\

We define now a simplicial map $g:\sd(\mathsf{L}*\mathsf{M}))\rightarrow Z_p^{*m}$ as follows.

Take a vertex in $\sd(\mathsf{L}*\mathsf{M})$. It is of the form $\sigma\cup\tau\neq\emptyset$ where $\sigma\in \mathsf{L}$ and $\tau\in\mathsf{M}$. \\

\noindent {\bf\em If $\tau\neq\emptyset$.} Let $\alpha:=\min_{\epsilon\in Z_p}|\tau^{\epsilon}|$. 
\begin{itemize}
\item If $\alpha=0$, define $\bar{\tau}:=\{\epsilon\in Z_p:\,\tau^{\epsilon}=\emptyset\}$ and $g(\sigma\cup\tau)=(s_0(\bar{\tau}),n-r+|\tau|)$ (we have indeed $\bar{\tau}\in\sigma_{p-2}^{p-1}$). 
\item If $\alpha>0$, define $\bar{\tau}:=\bigcup_{\epsilon:\,|\tau^{\epsilon}|=\alpha}\tau^{\epsilon}$ and $g(\sigma\cup\tau):=(s(\bar{\tau}),n-r+|\tau|)$.
\end{itemize}
The definition of $\bar{\tau}$ is illustrated on Figures~\ref{fig:tau} and~\ref{fig:tau_bar}.

\medskip

\noindent {\bf\em If $\tau=\emptyset$.} Choose $(\epsilon,\mu)$ in $\sigma$ with maximal $\mu$. Define $g(\sigma\cup\tau):=(\epsilon,\mu)$. Note that $\mathsf{L}$ is such that there is only one $\epsilon$ for which the maximum is attained.

\medskip

\begin{figure}
\includegraphics[width=8cm]{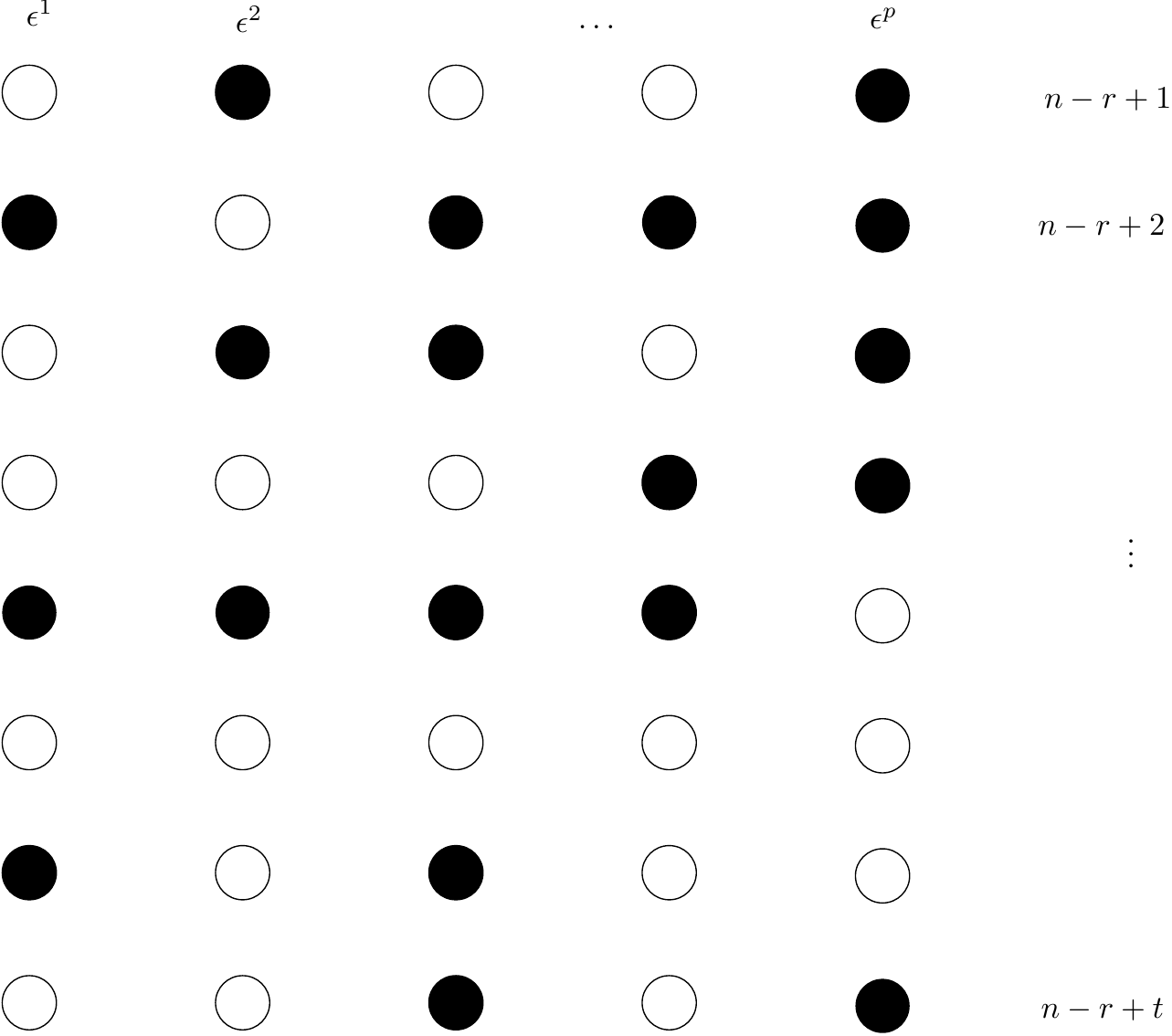}
\caption{An example of a simplex $\tau\in\mathsf{M}$.}
\label{fig:tau}
\end{figure}
\begin{figure}
\includegraphics[width=8cm]{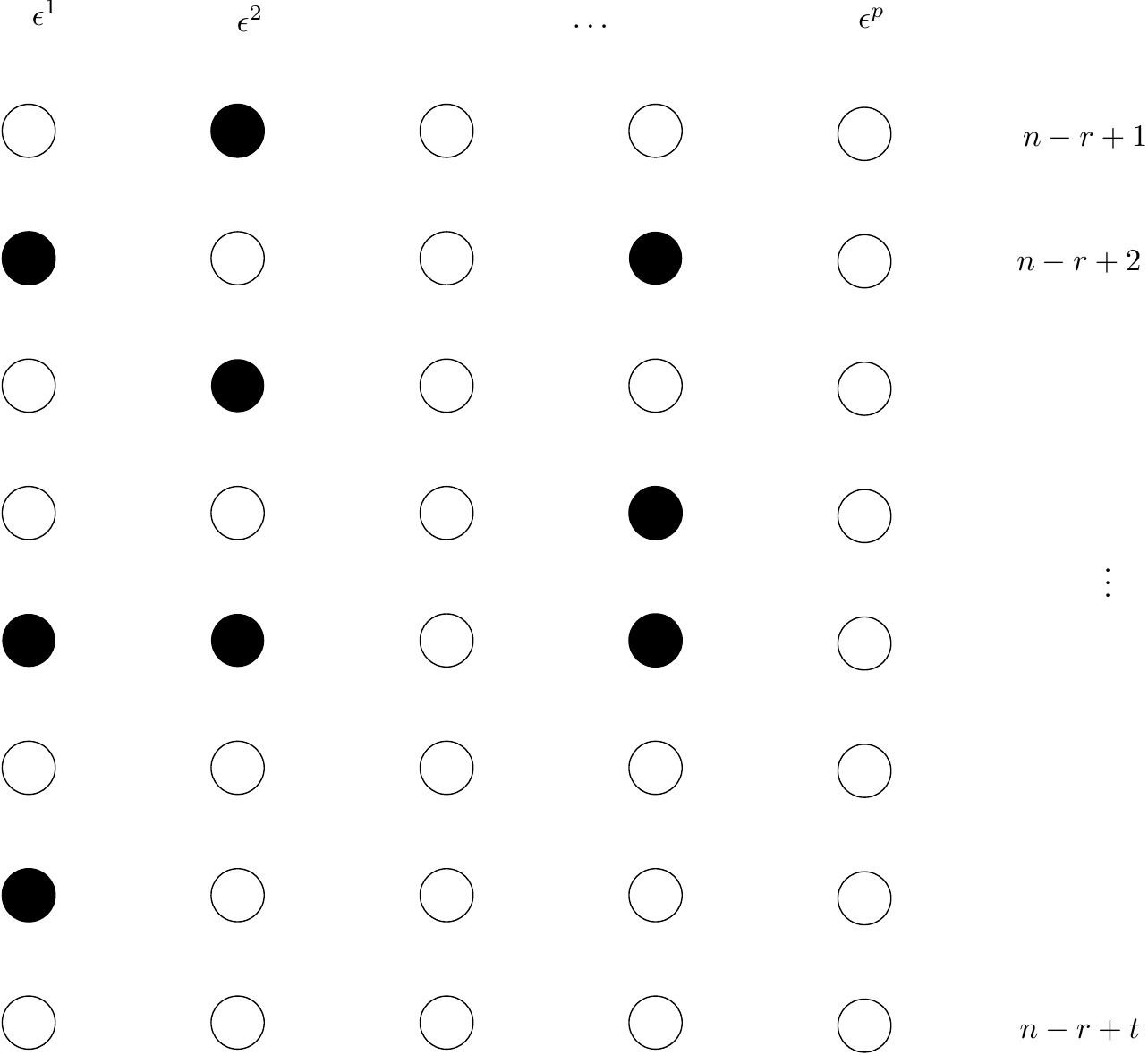}
\caption{The simplex $\bar{\tau}$ which leads to the definition of $g$.}
\label{fig:tau_bar}
\end{figure}

We check now that $g$ is a simplicial map. Assume for a contradiction that there are $\sigma\subseteq\sigma'$, $\tau\subseteq\tau'$ such that $g(\sigma\cup\tau)=(\epsilon,\mu)$ and $g(\sigma'\cup\tau')=(\epsilon',\mu)$ with $\epsilon\neq\epsilon'$. If $\tau=\emptyset$, then $\mu\leq n-r$ and $\tau'=\emptyset$. We should then have $\epsilon=\epsilon'$, which is impossible. If $\tau\neq\emptyset$, then $|\tau|=|\tau'|$, and thus $\tau=\tau'$. We should again have $\epsilon=\epsilon'$ which is impossible as well.

Moreover, $g$ is increasing: for $\sigma\subseteq\sigma'$ and $\tau\subseteq\tau'$, we have $|g(\sigma\cup\tau)|\leq|g(\sigma'\cup\tau')|$. \\

We get our map $\psi_{\#}$ by defining: $\psi_{\#}=g_{\#}\circ\sd_{\#}\circ f_{\#}$. It is a $Z_p$-equivariant chain map from 
$\mathcal{C}(\sd(Z_p^{*n}))$ to $\mathcal{C}(Z_p^{*m})$. 

This chain map $\psi_{\#}$ satisfies the condition of Lemma~\ref{ZpFan}. Hence, there exists $\rho\in Z_p^{*m}$ of the form $\rho=\{(\epsilon_1,\mu_1),\ldots,(\epsilon_n,\mu_n)\}$ with $\mu_i<\mu_{i+1}$ and $\epsilon_{i}\neq\epsilon_{i+1}$ for $i=1,\ldots,n-1$ such that $\rho$ is in the support of $\psi_{\#}(\rho')$ for some $\rho'\in\sd(Z_p^{*n})$.

Since $g$ is a simplicial map, we know that there is a permutation $\pi$ and a sequence $\sigma_{\pi(1)}\cup\tau_{\pi(1)}\subseteq\ldots\subseteq\sigma_{\pi(n)}\cup\tau_{\pi(n)}$ of simplices of $\mathsf{L}*\mathsf{M}$ such that $g(\sigma_i\cup\tau_i)=(\epsilon_i,\mu_i)$ with $\mu_i<\mu_{i+1}$ and $\epsilon_{i}\neq\epsilon_{i+1}$ for $i=1,\ldots,n-1$. Since $g$ is increasing, we get that $\pi(i)=i$ for all $i$. Using the fact that $f$ is simplicial, we get moreover that $|\sigma_{n}\cup\tau_{n}|=n$, and then that $|\sigma_{i}\cup\tau_{i}|=i$.

The fact that all $\mu_i$ are distinct implies that $\tau_i=0$ for $i=1,\ldots,n-r$. Indeed, $\tau_i=\tau_{i+1}$ implies then that $\tau_i=\emptyset$.
We have therefore $\tau_1=\cdots=\tau_{n-r}=\emptyset$ and thus $|\sigma_{n-r}|=n-r$. It implies that $|\tau_{n-r+l}|=n-r+l-|\sigma_{n-r+l}|\leq l$. On the other hand, we have that $|\tau_{n-r+l}|=\mu_{n-r+l}-n+r\geq l$. Thus, $|\tau_{n-r+l}|=l$.

Consider the sequence $(\omega_1,\nu_1),\ldots,(\omega_{n-r},\nu_{n-r})$, where $(\omega_1,\nu_1)$ is the unique vertex of $\tau_{r+1}$ and $(\omega_{l+1},\nu_{l+1})$ the unique vertex of $\tau_{r+l+1}\setminus\tau_{r+l}$ for $l=1,\ldots,n-1-r$. The sign $\omega_{l+1}$ is necessarily such that $\tau_{r+l}^{\omega_{l+1}}$ has a minimum cardinality among the $\tau_{r+l}^{\epsilon}$, otherwise the set of $\epsilon$ for which $|\tau_{r+l+1}^{\epsilon}|$ is minimum would be the same as for $|\tau_{r+l}^{\epsilon}|$, and, according to the definition of the maps $s$ and $s_0$, we would have $\epsilon_{l+1}=\epsilon_l$.

We clearly have $\left||\tau_{r+1}^{\epsilon}|-|\tau_{r+1}^{\epsilon'}|\right|\leq 1$ for all $\epsilon,\epsilon'$. Now assume that for $k\geq r+1$ we have $\left||\tau_k^{\epsilon}|-|\tau_k^{\epsilon'}|\right|\leq 1$ for all $\epsilon,\epsilon'$. Since the element added to $\tau_k$ to get $\tau_{k+1}$ is added to a $\tau_k^{\epsilon}$ with minimum cardinality, we have $\left||\tau_{k+1}^{\epsilon}|-|\tau_{k+1}^{\epsilon'}|\right|\leq 1$ for all $\epsilon,\epsilon'$. 
By induction we have in particular \begin{equation}\label{eq:balanced}\left||\tau_n^{\epsilon}|-|\tau_n^{\epsilon'}|\right|\leq 1\quad\mbox{for all } \epsilon,\epsilon'.\end{equation}

 Using the fact that $f$ is simplicial, we get a sequence $X_{n-r+1}\subseteq\ldots\subseteq X_n$ of signed vectors whose image by $f$ is $\tau_n$. Each $X_i$ provides a vertex $F(X_i)$ of $\KG^p(\hyp)$. 
For each $j$, define $U_j$ to be the set of $F(X_i)$ such that the sign of $f(X_i)$ is $\omega^j$. The $U_j$ are subsets of vertices of $\KG^p(\hyp)$ and pairwise disjoint. Moreover, for two distinct $j$ and $j'$, if $F(X_i)\in U_j$ and $F(X_{i'})\in U_j'$, we have $F(X_i)\cap F(X_{i'})=\emptyset$. Thus, the $U_j$ induces in $\KG^p(\hyp)$ a complete $p$-partite $p$-uniform hypergraph with $r=\cd^p(\hyp)$ vertices. Equation~\eqref{eq:balanced} indicates that the cardinalities of the $U_j$ differ by at most one. Since $|\tau_n|=n-r$, each $U_j$ has all its vertices of distinct colors.
\end{proof}

\section{Alternation number}\label{sec:alt}

\subsection{Definition}

Alishahi and Hajiabolhassan~\cite{AlHa13}, going on with ideas introduced in~\cite{Me11}, defined the {\em $q$-alternation number} of an hypergraph $\alt^q(\hyp)$ as an improvement of the $q$-colorability defect. It is defined as follows.

Let $q$ and $n$ be positive integers. An {\em alterning sequence} is a sequence $s_1,s_2,\ldots,s_n$ of elements of $Z_q$ such that $s_i\neq s_{i+1}$ for all $i=1,\ldots,n-1$. For a vector $X=(x_1,\ldots,x_n)\in(Z_q\cup\{0\})^n$ and a permutation $\pi\in\mathcal{S}_n$, we denote $\alt_{\pi}(X)$ the maximum length of an alternating subsequence of the sequence $x_{\pi(1)},\ldots,x_{\pi(n)}$. Note that by definition this subsequence has no zero element.

\begin{example}
Let $n=9$, $q=3$, and $X=(\omega^2,\omega^2,0,0,\omega^1,\omega^3,0,\omega^3,\omega^2)$, we have $\alt_{\id}(X)=4$. If $\pi$ is a permutation acting only on the first four positions, then $\alt_{\id}(X)=\alt_{\pi}(X)$. If $\pi$ exchanges the last two elements of $X$, we have $\alt_{\pi}(X)=5$.
\end{example}

Let $\hyp=(V,E)$ be a hypergraph with $n$ vertices. We identify $V$ and $[n]$. The {\em $q$-alternation number} of an hypergraph $\alt^q(\hyp)$ with $n$ vertices is defined as:
\begin{equation}\label{eq:alt}\alt^q(\hyp)=\min_{\pi\in\mathcal{S}_n}\max\{\alt_{\pi}(X):\,X\in(Z_q\cup\{0\})^n\mbox{ with }\,E(\hyp[X^{j}])=\emptyset\mbox{ for $j=1,\ldots,q$}\}.\end{equation} Note that this number does not depend on the way $V$ and $[n]$ have been identified.

\subsection{Improving the results with the alternation number}

Alishahi and Hajiabolhassan improved the K\v{r}\'i\v{z} theorem by the following theorem.

\begin{theorem-cite}[Alishahi-Hajiabolhassan theorem]
Let $\hyp$ be a hypergraph and assume that $\emptyset$ is not an edge of $\hyp$. Then $$\chi(\KG^q(\hyp))\geq\left\lceil\frac{|V(\hyp)|-\alt^q(\hyp)}{q-1}\right\rceil$$ for any integer $q\geq 2$.
\end{theorem-cite}

Theorem~\ref{thm:main} and Theorem~\ref{thm:lochyp} can be similarly improved with the alternation number. Let $\pi$ be the permutation on which the minimum is attained in Equation~\eqref{eq:alt}. We replace $r=\cd^p(\hyp)$ by $r=|V(\hyp)|-\alt^p(\hyp)$ in the both proofs and $|X|$ in the definition of $f$ by $\alt_{\pi}(X)$ in the proof of Theorem~\ref{thm:main} without any other change. Since we have $|V(\hyp)|-\alt^p(\hyp)\geq\cd^p(\hyp)$, often with a strict inequality -- see \cite{AlHa13} -- it improves these theorems.

\begin{theorem}\label{thm:main-bis}
Let $\hyp$ be a hypergraph and assume that $\emptyset$ is not an edge of $\hyp$. Let $p$ be a prime number. Then any proper coloring $c$ of $\KG^p(\hyp)$ with colors $1,\ldots,t$ ($t$ arbitrary) must contain a complete $p$-uniform $p$-partite hypergraph with parts $U_1,\ldots,U_p$ satisfying the following properties. 
\begin{itemize}
\item It has $|V(\hyp)|-\alt^p(\hyp)$ vertices. 
\item The values of $|U_j|$ for $j=1,\ldots,p$ differ by at most one.
\item For any $j$, the vertices of $U_j$ get distinct colors.
\end{itemize}
\end{theorem}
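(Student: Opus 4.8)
The plan is to re-run the proof of Theorem~\ref{thm:main} almost verbatim, replacing the colorability defect by the quantity $|V(\hyp)|-\alt^p(\hyp)$ and replacing the ``size'' function $|X|$ by the alternation length $\alt_\pi(X)$ for the optimal permutation $\pi$. First I would fix $\pi\in\mathcal{S}_n$ as the permutation attaining the minimum in Equation~\eqref{eq:alt}, set $r=|V(\hyp)|-\alt^p(\hyp)$, and keep the notation $\mathsf{L}=Z_p^{*(n-r)}$, $\mathsf{M}=(\sigma_{p-2}^{p-1})^{*t}$, $m=n-r+t(p-1)$. The only genuine change is in the definition of $f$: given $X\in(Z_p\cup\{0\})^n\setminus\{(0,\ldots,0)\}$, I replace the case split ``$|X|\leq n-r$ versus $|X|\geq n-r+1$'' by ``$\alt_\pi(X)\leq n-r$ versus $\alt_\pi(X)\geq n-r+1$'', and in the first case I set $f(X):=(\epsilon,\alt_\pi(X))$ with $\epsilon$ the sign of the \emph{last} element of a longest alternating subsequence of $x_{\pi(1)},\ldots,x_{\pi(n)}$ (rather than the first nonzero component). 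The maps $g$, $s$, $s_0$, the complexes $W_\ell$, and the composition $\psi_{\#}=g_\#\circ\sd_\#\circ f_\#$ are then unchanged.

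Next I would verify the two facts about $f$ that the original proof actually uses. The first is that $f$ induces a $Z_p$-equivariant simplicial map $\sd(Z_p^{*n})\to\mathsf{L}*\mathsf{M}$: equivariance is immediate because multiplying $X$ by $\omega$ multiplies the chosen sign by $\omega$ and leaves $\alt_\pi$ and the chosen edge $F(X)$ untouched; simpliciality requires that if $X\subseteq X'$ (i.e.\ $X$ is obtained from $X'$ by zeroing out coordinates) then the value of $f$ on $X$ is compatible with that on $X'$, which holds because $\alt_\pi$ is monotone under this partial order and because in the ``large'' case $F(X')$ can be chosen to contain $F(X)$. The second fact, used near the end, is that $f$ being simplicial lets us lift the chain $\tau_{n-r+1}\subseteq\cdots\subseteq\tau_n$ of $\mathsf{M}$ to a chain $X_{n-r+1}\subseteq\cdots\subseteq X_n$ of signed vectors whose $f$-images are the $\tau_i$; this is a purely formal consequence of simpliciality and needs no change. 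The key point making the argument work is exactly the inequality $|V(\hyp)|-\alt^p(\hyp)\geq\cd^p(\hyp)$: in the ``large'' case, $\alt_\pi(X)\geq n-r+1$ forces, by the definition of $r$ via Equation~\eqref{eq:alt}, that some $X^j$ contains an edge of $\hyp$, which is what licenses the choice of $F(X)$.

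After that, the rest of the proof is copied word for word: $\psi_{\#}$ is a $Z_p$-equivariant chain map $\mathcal{C}(\sd(Z_p^{*n}))\to\mathcal{C}(Z_p^{*m})$, Lemma~\ref{ZpFan} produces an alternating simplex $\rho=\{(\epsilon_1,\mu_1),\ldots,(\epsilon_n,\mu_n)\}$ in the support of $\psi_{\#}(\rho')$; simpliciality and monotonicity of $g$ force the preimage chain $\sigma_1\cup\tau_1\subseteq\cdots\subseteq\sigma_n\cup\tau_n$ to have $|\sigma_i\cup\tau_i|=i$; the distinctness of the $\mu_i$ forces $\tau_1=\cdots=\tau_{n-r}=\emptyset$ and $|\tau_{n-r+l}|=l$; the balancedness argument (tracking which $\tau_k^\epsilon$ receives the newly added vertex, using the definitions of $s$ and $s_0$ together with $\epsilon_{l+1}\neq\epsilon_l$) gives Equation~\eqref{eq:balanced}; and finally the $F(X_i)$ split into the parts $U_1,\ldots,U_p$ according to the sign of $f(X_i)$, yielding a complete $p$-uniform $p$-partite subhypergraph of $\KG^p(\hyp)$ with $r=|V(\hyp)|-\alt^p(\hyp)$ vertices, nearly equal parts, and distinct colors within each part (because $|\tau_n|=n-r$ and the second coordinates $n-r+c(F(X_i))$ are then forced distinct within each sign class).

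The main obstacle — really the only place demanding care — is confirming that the new $f$ is still simplicial. One must check that $\alt_\pi$ is monotone with respect to the face partial order on $(Z_p\cup\{0\})^n$ (zeroing coordinates cannot lengthen a longest alternating subsequence), that in the ``small'' case the ``last sign of a longest alternating subsequence'' can be chosen coherently along a chain $X\subseteq X'$ with $\alt_\pi(X)=\alt_\pi(X')$ or $\alt_\pi(X)<\alt_\pi(X')$ so that the resulting vertices of $\mathsf{L}$ are nested, and that the passage between the ``small'' and ``large'' regimes along a chain is consistent (since $\mathsf{L}$ and $\mathsf{M}$ are joined, a vertex from $\mathsf{L}$ and one from $\mathsf{M}$ are always compatible, so only the within-$\mathsf{L}$ and within-$\mathsf{M}$ nestings matter). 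Once this is in hand, literally no other line of the proof of Theorem~\ref{thm:main} needs modification, and the analogous replacement in the proof of Theorem~\ref{thm:lochyp} is immediate. I therefore expect the write-up to consist mostly of the sentence already in the excerpt — ``replace $r$ by $|V(\hyp)|-\alt^p(\hyp)$ and $|X|$ by $\alt_\pi(X)$'' — together with the short monotonicity check just described.
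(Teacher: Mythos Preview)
Your approach is essentially the paper's: the paper's entire proof of Theorem~\ref{thm:main-bis} is the one sentence you anticipated, namely ``replace $r=\cd^p(\hyp)$ by $r=|V(\hyp)|-\alt^p(\hyp)$ and $|X|$ by $\alt_\pi(X)$ in the definition of $f$, without any other change,'' where $\pi$ is the optimal permutation from Equation~\eqref{eq:alt}.

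Two small corrections to your write-up. First, your switch from ``first nonzero component'' to ``sign of the last element of a longest alternating subsequence'' is unnecessary: the paper keeps the first nonzero component (read in $\pi$-order, or after relabelling so that $\pi=\id$), and the simpliciality check on the $\mathsf{L}$ side goes through because if $X\subseteq X'$ and $\alt_\pi(X)=\alt_\pi(X')$ then their first nonzero signs in $\pi$-order agree (otherwise prepending would lengthen an alternating subsequence). Your variant happens to work too, but it is not the change the paper makes. Second, your stated reason for simpliciality on the $\mathsf{M}$ side --- ``$F(X')$ can be chosen to contain $F(X)$'' --- is not the correct argument. What one must rule out is that along a chain $X_1\subseteq\cdots\subseteq X_k$ the images $(\omega^{j_i},n-r+c(F(X_i)))$ hit all $p$ signs in a single colour coordinate; but if that happened the corresponding $F(X_i)$ would lie in pairwise disjoint $X_k^{j}$'s and share a colour, giving a monochromatic edge of $\KG^p(\hyp)$ and contradicting that $c$ is proper. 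This is the same reason simpliciality holds in the original proof and is unaffected by the passage from $|X|$ to $\alt_\pi(X)$.
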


\begin{theorem}\label{thm:lochyp-bis}
Let $\hyp$ be a hypergraph and assume that $\emptyset$ is not an edge of $\hyp$. Then 
$$\chi_{\ell}(\KG^p(\hyp))\geq\min\left(\left\lceil\frac{|V(\hyp)|-\alt^p(\hyp)}{p}\right\rceil+1,\left\lceil\frac{|V(\hyp)|-\alt^p(\hyp)}{p-1}\right\rceil\right)$$ for any prime number $p$.
\end{theorem}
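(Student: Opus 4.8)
The plan is to run the proof of Theorem~\ref{thm:lochyp} verbatim, using Theorem~\ref{thm:main-bis} in place of Theorem~\ref{thm:main}; this replaces $\cd^p(\hyp)$ by $r:=|V(\hyp)|-\alt^p(\hyp)$ everywhere, and since $r\geq\cd^p(\hyp)$ the resulting bound is at least as strong. Concretely, I would fix an arbitrary proper coloring $c$ of $\KG^p(\hyp)$ and invoke Theorem~\ref{thm:main-bis} to produce a complete $p$-uniform $p$-partite subhypergraph $\mathcal{G}$ of $\KG^p(\hyp)$ with parts $U_1,\dots,U_p$ of sizes differing by at most one, with $r$ vertices in all, and with each $U_j$ rainbow-colored by $c$. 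Then I would select an index $j$ with $|U_j|=\lceil r/p\rceil$, which exists because the parts split an $r$-element set into $p$ nearly equal blocks, and distinguish two cases exactly as in the proof of Theorem~\ref{thm:lochyp}.

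In the case $\lceil r/(p-1)\rceil>\lceil r/p\rceil$, I would first note that every color class of $\mathcal{G}$ has at most $p-1$ vertices: it meets each rainbow part $U_i$ in at most one vertex, and it cannot meet all $p$ parts, for otherwise those $p$ vertices would form a monochromatic edge of $\mathcal{G}\subseteq\KG^p(\hyp)$, contradicting properness of $c$. Hence $c$ uses at least $\lceil r/(p-1)\rceil>|c(U_j)|$ colors on $\mathcal{G}$, so there is a vertex $v$ of $\mathcal{G}$ outside $U_j$ with $c(v)\notin c(U_j)$. Choosing an edge $e$ of $\mathcal{G}$ through $v$ and letting $u$ be its unique vertex in $U_j$, completeness of $\mathcal{G}$ gives $U_j\subseteq\mathcal{N}[e\setminus\{u\}]$ (for each $w\in U_j$ the set $(e\setminus\{u\})\cup\{w\}$ is an edge), while $v\in e\setminus\{u\}\subseteq\mathcal{N}[e\setminus\{u\}]$, so $|c(\mathcal{N}[e\setminus\{u\}])|\geq|U_j|+1=\lceil r/p\rceil+1$. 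In the remaining case $\lceil r/(p-1)\rceil=\lceil r/p\rceil$, the same completeness observation gives, for every edge $e$ of $\mathcal{G}$ with $u$ its unique vertex in $U_j$, that $U_j\subseteq\mathcal{N}[e\setminus\{u\}]$ and hence $|c(\mathcal{N}[e\setminus\{u\}])|\geq\lceil r/p\rceil=\lceil r/(p-1)\rceil$. Either way, for a suitable edge $e$ one obtains $|c(\mathcal{N}[e\setminus\{u\}])|\geq\min(\lceil r/p\rceil+1,\lceil r/(p-1)\rceil)$, and minimizing over all proper colorings $c$ finishes the proof.

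I do not expect a genuine obstacle in this derivation: every step is already present in the proof of Theorem~\ref{thm:lochyp}, and the only thing that really has to be in place is Theorem~\ref{thm:main-bis} itself. That theorem is obtained by rerunning the proof of Theorem~\ref{thm:main} with $r$ redefined as $|V(\hyp)|-\alt^p(\hyp)$ and with $|X|$ replaced by $\alt_{\pi}(X)$ for the permutation $\pi$ attaining the minimum in~\eqref{eq:alt}, as indicated just before its statement; one checks that this modified $f$ is still a $Z_p$-equivariant simplicial map into $\mathsf{L}*\mathsf{M}$ and that the bookkeeping on the $\tau_i$ goes through unchanged. Granting this, the local-chromatic bound above follows with no new work, precisely as Theorem~\ref{thm:lochyp} follows from Theorem~\ref{thm:main}.
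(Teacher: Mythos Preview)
Your proposal is correct and follows exactly the approach the paper itself takes: the paper states that Theorems~\ref{thm:main} and~\ref{thm:lochyp} are improved simply by replacing $r=\cd^p(\hyp)$ with $r=|V(\hyp)|-\alt^p(\hyp)$ (and $|X|$ by $\alt_{\pi}(X)$ in the construction of $f$), with no other change. Your write-up actually supplies more detail than the paper's proof of Theorem~\ref{thm:lochyp} does---in particular, you justify the existence of the vertex $v$ with $c(v)\notin c(U_j)$ via the $p-1$ bound on color-class sizes, and you spell out why $U_j\subseteq\mathcal{N}[e\setminus\{u\}]$---but the structure is identical.
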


\subsection{Complexity}

It remains unclear whether the alternation number, or a good upper bound of it, can be computed efficiently. However, we can note that given a hypergraph $\hyp$, computing the alternation number for a fixed permutation is an NP-hard problem. 

\begin{proposition} 
Given a hypergraph $\hyp$, a permutation $\pi$, and a number $q$, computing 
$$\max\{\alt_{\pi}(X):\,X\in(Z_q\cup\{0\})^n\mbox{ with }\,E(\hyp[X^{j}])=\emptyset\mbox{ for $j=1,\ldots,q$}\}$$ 
is NP-hard.
\end{proposition}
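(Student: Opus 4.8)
The plan is to reduce from a known NP-hard problem, and the natural candidate is the (un-)$2$-colorability of hypergraphs, or equivalently, we want to leverage the structure of $\alt_\pi$. Fix $q=2$ to begin with, so that $Z_2 = \{+1,-1\}$ and a vector $X \in (Z_2\cup\{0\})^n$ is a signed subset of $[n]$; then $\alt_\pi(X)$ is the number of "sign changes plus one" along the order $\pi$. Choosing $\pi = \id$ for concreteness (the problem statement lets us pick any $\pi$ as part of the input), maximizing $\alt_{\id}(X)$ subject to both $X^{+1}$ and $X^{-1}$ being independent sets of $\hyp$ is the problem of finding a long alternating pattern $+ - + - \cdots$ on a sub-sequence of $[n]$ that can be partitioned consistently into two independent sets. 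The idea is to design $\hyp$ so that a long alternating sequence is forced to "use almost all of $[n]$," which happens precisely when $\hyp$ can be partitioned into two independent sets in a prescribed alternating way.

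First I would make the reduction explicit from a concrete NP-complete problem. A clean choice is \textsc{Independent Set} on graphs, or better, the problem "given a graph $G$ on vertex set $[n]$ and an integer $k$, do there exist two disjoint independent sets $A, B$ in $G$ with $|A|+|B| \geq k$" — call this \textsc{Two Disjoint Independent Sets}, which is NP-hard (it generalizes \textsc{Independent Set}, taking $B=\emptyset$; more carefully one checks the max value of $|A|+|B|$ is hard to compute, e.g.\ because it is at least $n$ iff $G$ is $2$-colorable after deleting few vertices, tying it to \textsc{Vertex Cover}-type hardness). Given such $G$, take $\hyp = G$ (viewed as a $2$-uniform hypergraph) and $\pi = \id$. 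Then a vector $X$ with $E(\hyp[X^{+1}]) = E(\hyp[X^{-1}]) = \emptyset$ is exactly a choice of two disjoint independent sets $A = X^{+1}$, $B = X^{-1}$. The subtlety is that $\alt_{\id}(X)$ is the length of the longest alternating subsequence of the sign pattern, which can be much smaller than $|A|+|B|$ if the vertices of $A$ and $B$ are badly interleaved with respect to the order $1, 2, \ldots, n$.

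The key step, and the main obstacle, is therefore to control the interleaving: I need to pad the construction so that the maximizing $X$ can always be rearranged (within the fixed order) to an alternating pattern of length essentially $|A|+|B|$. The standard trick is to take many disjoint copies and blow up: replace each vertex $i$ of $G$ by a block of $N$ twin copies $i^{(1)}, \dots, i^{(N)}$ (twins have the same adjacencies, and each $i^{(s)}$ is non-adjacent to $i^{(t)}$), order the blocks as $1 < 2 < \cdots < n$ with the copies consecutive inside each block, and set $\pi=\id$ on this enlarged vertex set of size $nN$. Inside a single block assigned to independent set $A$, the best one can do is a constant (the block contributes at most $1$ to the alternation count unless it is split), so a dishonest assignment is penalized; one then argues that the optimum is attained by assigning whole blocks, and that consecutive blocks can be alternately signed $+,-,+,-,\dots$ whenever the underlying $A,B$ decomposition exists, yielding $\alt_{\id}(X) \approx N(|A|+|B|)$. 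Comparing this quantity against the threshold $Nk$ (for $N$ large enough, say $N = n+1$, to wash out the additive block-boundary errors) recovers exactly whether $G$ has two disjoint independent sets of total size $\geq k$. I would finish by checking that the construction is polynomial in the input size and that the same argument, with $Z_q$ replaced for the general claim, goes through verbatim since fixing the relevant two signs among the $q$ available only helps; hence the problem is NP-hard for every fixed $q \geq 2$, and a fortiori when $q$ is part of the input.
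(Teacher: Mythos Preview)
Your reduction has a genuine gap in the key step.  Consider your blow-up of $G$ into blocks of $N$ non-adjacent twins, and take the ``honest'' vector $X$ that assigns the whole block of $i$ the sign $+1$ for $i\in A$ and $-1$ for $i\in B$.  The resulting sign word is a concatenation of monochromatic blocks, and the longest alternating subsequence of $+^{N}-^{N}+^{N}\cdots$ has length equal to the number of blocks, i.e.\ roughly $|A|+|B|$, \emph{not} $N(|A|+|B|)$.  Worse, the optimum is not attained by whole-block assignments at all: since twins inside a block are pairwise non-adjacent, nothing forbids you from alternating $+,-,+,-,\dots$ \emph{within} a single block.  Doing this on the blocks of a maximum independent set $I$ of $G$ (and zero elsewhere) yields $X^{+1},X^{-1}$ with common block-support $I$, both independent in the blow-up, and $\alt_{\id}(X)=N|I|$.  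A short case analysis shows that in general the maximum alternation is $N\alpha(G)+O(n)$, not $N(|A|+|B|)$; so your construction actually encodes $\alpha(G)$, not the two-disjoint-independent-sets value.  (A one-edge graph already exhibits the discrepancy: $\alpha(G)=1$ while $\max(|A|+|B|)=2$, and the blow-up has maximum alternation exactly $N$.)

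The paper avoids this interleaving headache entirely with a cleaner gadget: it takes $\hyp$ to be the \emph{join} of $G$ with a copy $G'$, numbering vertex $v$ as $2\rho(v)-1$ and its copy $v'$ as $2\rho(v)$.  Because every vertex of $G$ is adjacent to every vertex of $G'$ in the join, an independent set of $\hyp$ lies entirely in one side; hence $X^{+1}$ lives in $G$ and $X^{-1}$ in $G'$ (or vice versa), and the interleaved numbering makes the nonzero entries of $X$ automatically alternate.  This pins the maximum exactly at $2\alpha(G)$ with no blow-up and no asymptotic slack.  Your construction can be salvaged (it does reduce from \textsc{Independent Set}, via the $N\alpha(G)$ computation above), but not with the analysis you wrote; the join trick is both simpler and exact.
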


\begin{proof}
The proof consists in proving that the problem of finding a maximum independent set in a graph can be polynomially reduced to our problem for $q=2$, $\pi=\id$, and $\hyp$ being some special graph.

Let $G$ be a graph. Define $G'$ to be a copy of $G$ and consider the {\em join} $\hyp$ of $G$ and $G'$. The join of two graphs is the disjoint union of the two graphs plus all edges $vv'$ with $v$ a vertex of $G$ and $v'$ a vertex of $G'$. We number the vertices of $G$ arbitrarily with a bijection $\rho:V\rightarrow[|V|]$.
It gives the following numbering for the vertices of $\hyp$. In $\hyp$, a vertex $v$ receives number $2\rho(v)-1$ and its copy $v'$ receives the number $2\rho(v)$. Let $n=2|V|$. As usual, we denote the maximum cardinality of an independent set of $G$ by $\alpha(G)$. 

Let $I\subseteq V$ be a independent set of $G$. Define $X\in(Z_2\cup\{0\})^n$ as follows: 
$$X_{2\rho(v)-1}=+1\mbox{ and }X_{2\rho(v)}=-1\mbox{ for all $v\in I$, and } x_i=0\mbox{ for the other indices $i$.}$$ By definition of the numbering, we have $\alt_{\id}(X)=2|I|$ and thus $$\max\{\alt_{\id}(X):\,X\in(Z_2\cup\{0\})^n\mbox{ with }\,E(\hyp[X^{j}])=\emptyset\mbox{ for $j=1,2$}\}\geq 2\alpha(G)$$

Conversely, any $X\in(Z_2\cup\{0\})^n$ gives an independent set $I$ in $G$ and another $I'$ in $G'$: take a longest alternating subsequence in $X$ and define the set $I$ as the set of vertices $v$ such that $X_{2\rho(v)-1}\neq 0$ and the set $I'$ as the set of vertices $v$ such that $X_{2\rho(v)}\neq 0$. 
 We have $\alt_{\id}(X)=|I|+|I'|$ because two components of $X$ with distinct index parities cannot be of opposite signs: each vertex of $G$ is the neighbor of each vertex of $G'$. Thus 
$$\max\{\alt_{\id}(X):\,X\in(Z_2\cup\{0\})^n\mbox{ with }\,E(\hyp[X^{j}])=\emptyset\mbox{ for $j=1,2$}\}\leq 2\alpha(G).$$
\end{proof}

The same proof gives also that computing the two-colorability defect $\cd^2(\hyp)$ of any hypergraph $\hyp$ is an NP-hard problem.

\bibliographystyle{amsplain}
\bibliography{Kneser}

\providecommand{\bysame}{\leavevmode\hbox to3em{\hrulefill}\thinspace}
\providecommand{\MR}{\relax\ifhmode\unskip\space\fi MR }
\providecommand{\MRhref}[2]{%
  \href{http://www.ams.org/mathscinet-getitem?mr=#1}{#2}
}
\providecommand{\href}[2]{#2}
\begin{thebibliography}{10}

\bibitem{AlHa13}
M.~Alishahi and H.~Hajiabolhassan, \emph{On chromatic number of {K}neser
  hypergraphs}, preprint.

\bibitem{AlFrLo86}
N.~Alon, P.~Frankl, and L.~Lov\'asz, \emph{The chromatic number of {K}neser
  hypergraphs}, Transactions Amer. Math. Soc. \textbf{298} (1986), 359--–370.

\bibitem{Do88}
V.~L. Dol'nikov, \emph{A certain combinatorial inequality}, Siberian Math. J.
  \textbf{29} (1988), 375--397.

\bibitem{Er76}
P.~Erd\H{o}s, \emph{Problems and results in combinatorial analysis}, Colloquio
  Internazionale sulle Teorie Combinatorie (Rome 1973), Vol. II, No. 17 in Atti
  dei Convegni Lincei, 1976, pp.~3--–17.

\bibitem{ErFuHaKoRo86}
P.~Erd\H{o}s, Z.~F\"uredi, A.~Hajnal, P.~Komj\'ath, V.~R\"odl, and \'A. Seress,
  \emph{Coloring graphs with locally few colors}, Discrete Mathematics
  \textbf{59} (1986), 21--34.

\bibitem{Fa52}
K.~Fan, \emph{A generalization of {T}ucker's combinatorial lemma with
  topological applications}, Annals Math., II Ser. \textbf{56} (1952),
  431--437.

\bibitem{Fa82}
\bysame, \emph{Evenly distributed subset of ${S}^n$ and a combinatorial
  application}, Pacific J. Math. \textbf{98} (1982), 323--325.

\bibitem{HaSaScZi09}
B.~Hanke, R.~Sanyal, C.~Schultz, and G.~Ziegler, \emph{Combinatorial {S}tokes
  formulas via minimal resolutions}, Journal of Combinatorial Theory, Series A
  \textbf{116} (2009), 404--420.

\bibitem{Kr92}
I.~K\v{r}\'i\v{z}, \emph{Equivariant cohomology and lower bounds for chromatic
  numbers}, Transactions Amer. Math. Soc. \textbf{33} (1992), 567--577.

\bibitem{Kr00}
\bysame, \emph{A correction to ``{E}quivariant cohomology and lower bounds for
  chromatic numbers''}, Transactions Amer. Math. Soc. \textbf{352} (2000),
  1951--1952.

\bibitem{Lo79}
L.~Lov\'asz, \emph{Kneser's conjecture, chromatic number and homotopy}, Journal
  of Combinatorial Theory, Series A \textbf{25} (1978), 319--–324.

\bibitem{Ma04}
J.~Matou\v{s}ek, \emph{A combinatorial proof of {K}neser's conjecture},
  Combinatorica \textbf{24} (2004), 163--170.

\bibitem{MaZi04}
J.~Matou\v{s}ek and G.~Ziegler, \emph{Topological lower bounds for the
  chromatic number: {A} hierarchy}, Jahresber. Deutsch. Math.-Verein.
  \textbf{106} (2004), 71--90.

\bibitem{Me05}
F.~Meunier, \emph{A $\mathbb{Z}_q$-{F}an theorem}, Tech. report, Laboratoire
  Leibniz-IMAG, Grenoble, 2005.

\bibitem{Me05bis}
\bysame, \emph{A topological lower bound for the circular chromatic number of
  {S}chrijver graphs}, Journal of graph theory \textbf{49} (2005), 257--261.

\bibitem{Me11}
\bysame, \emph{The chromatic number of almost-stable {K}neser hypergraphs},
  Journal of Combinatorial Theory, Series A \textbf{118} (2011), 1820--1828.

\bibitem{SiTa06}
G.~Simonyi and G.~Tardos, \emph{Local chromatic number, {K}y {F}an's theorem,
  and circular colorings}, Combinatorica \textbf{26} (2006), 587--626.

\bibitem{SiTa07}
\bysame, \emph{Colorful subgraphs of {K}neser-like graphs}, European journal of
  Combinatorics \textbf{28} (2007), 2188--2200.

\bibitem{St76}
S.~Stahl, \emph{$n$-tuple colorings and associated graphs}, Journal of
  Combinatorial Theory, Series B \textbf{20} (1976), 185--203.

\bibitem{VaVe05}
M.~Valencia-Pabon and J.~Vrecia, \emph{On the diameter of {K}neser graphs},
  Discrete Mathematics \textbf{305} (2005), 383--385.

\bibitem{Zi02}
G.~Ziegler, \emph{Generalized {K}neser coloring theorems with combinatorial
  proofs}, Invent. Math. \textbf{147} (2002), 671--691.

\bibitem{Zi06}
\bysame, \emph{{\em Erratum}: {G}eneralized {K}neser coloring theorems with
  combinatorial proofs}, Invent. Math. \textbf{163} (2006), 227--228.

\end{thebibliography}

\end{document}